\theoremstyle{plain}
\newtheorem{theorem}{Theorem}[section]
\newtheorem*{Htheorem}{Howson's Theorem}
\newtheorem{lma}[theorem]{Lemma}
\newtheorem{cor}[theorem]{Corollary}
\theoremstyle{definition}
\newtheorem{expl}[theorem]{Example}
\newtheorem{rmk}[theorem]{Remark}
\newcommand{\sdp}{\ast_{\theta}}
\newcommand{\aut}[1]{\text{Aut}(#1)}
\newcommand{\A}{\mathcal{A}}
\newcommand{\B}{\mathcal{B}}
\newcommand{\mapright}[1]{\stackrel{#1}{\longrightarrow}}
\newcommand{\gen}[1]{\left\langle #1\right\rangle}
\newcommand{\inv}{^{-1}}
\newcommand{\discup}{\,\dot\cup\,}
\newcommand{\Z}{\mathbb{Z}}
\newcommand{\rk}[1]{\text{rk}(#1)}
\begin{document}

\setlength{\parindent}{0pt} \thispagestyle{empty}


\begin{flushleft}

\Large

\textbf{Howson's property for semidirect products of\\ semilattices by groups}

\end{flushleft}


\normalsize

\vspace{-2.25mm}

\rule{\textwidth}{1.5pt}

\vspace{2mm}


\begin{flushright}

\large

\textsc{Pedro V. Silva}

\normalsize

\vspace{3pt}


\scriptsize

\textit{Centro de Matem\'{a}tica, Faculdade de Ci\^{e}ncias, Universidade do Porto,}

\textit{Rua Campo Alegre 687, 4169-007 Porto, Portugal}

\texttt{pvsilva@fc.up.pt}

\vspace{2mm}

\large

\textsc{Filipa Soares}

\normalsize

\vspace{3pt}


\scriptsize

\textit{\'Area Departamental de Matem\'atica, ISEL,}

\textit{Rua Conselheiro Em\'idio Navarro 1, 1949-014 Lisboa, Portugal}

\& \hspace{2mm} \textit{Centro de \'Algebra da Universidade de Lisboa,}

\textit{Avenida Prof. Gama Pinto 2, 1649-003 Lisboa, Portugal}

\texttt{falmeida@adm.isel.pt}

\vspace{5mm}

\normalsize

\today

\end{flushright}

\medskip

\begin{flushleft}

\small

{\bf Abstract.} An inverse semigroup $S$ is a Howson inverse semigroup if
  the intersection of finitely generated inverse subsemigroups of $S$
  is finitely generated. Given a locally finite action $\theta$ of a
  group $G$ on a semilattice $E$, it is proved that $E \sdp G$ is a Howson
  inverse semigroup if and only if $G$ is a Howson
  group. It is also shown that this equivalence fails for arbitrary actions.

\medskip

{\bf Keywords.} $E$-unitary inverse semigroup,
 Howson's theorem, locally finite action,
 semidirect product of a semilattice by a group.

\medskip

{\bf 2010 Mathematics Subject Classification.} 20M18

\medskip

\date{\today}

\end{flushleft}

\bigskip

\section{Introduction}

In \cite{H54}, Howson proved a result that would become known as Howson's Theorem:

\begin{Htheorem}
 The intersection of two finitely generated subgroups of a free group
 is a finitely generated subgroup.
\end{Htheorem}

This property, not being true in general, led to defining a group $G$ to be
a \emph{Howson group} if the intersection of any two finitely
generated subgroups of $G$ 
is again a finitely generated subgroup of $G$.

Similarly, we say that an inverse semigroup $S$ is a \emph{Howson inverse semigroup} 
if the intersection of any two finitely generated inverse subsemigroups of $S$
is finitely generated. Note that if $S$ is a group, then the inverse
subsemigroups of $S$ are precisely its subgroups, thus $S$ is a Howson
inverse semigroup if and only if it is a Howson group.

We remark also that $S$ being a Howson inverse semigroup does not
imply that the intersection of finitely generated {\em subsemigroups}
of $S$ is finitely generated (i.e., $S$ needs not be a {\em Howson
semigroup}). A counterexample is provided by the free 
group $F$ of rank 2 \cite[Proposition 2.1(ii)]{BGS14}. 

Contrary to the behaviour of free groups, Jones and Trotter showed that,
although the free monogenic inverse semigroup is a Howson inverse
semigroup \cite[Theorem~1.6]{JT89}, that is not the case for any other
free inverse 
semigroup~\cite[Corollary~2.2]{JT89}. However, the
intersection of any two monogenic inverse subsemigroups of a free
inverse semigroup is always finitely generated \cite{Sil91}.

As one would expect, no general characterizations of Howson groups
are known. In what is probably the most general result of that kind,
Ara\'ujo, Sykiotis and the first author proved that every 
fundamental group of a finite graph of groups with
virtually polycyclic vertex groups and finite edge groups is a Howson
group \cite[Theorem 3.10]{ASS14}.

Thus the problem of identifying Howson inverse semigroups promises to
be even harder. Given the extraordinary importance assumed by
$E$-unitary inverse semigroups in the theory of inverse semigroups,
they constitute a good starting point, particularly the case of
semidirect products of semilattices by groups.

Indeed, O'Carroll proved in \cite{OC76} that every $E$-unitary inverse
semigroup $S$ embeds into some semidirect product $E \sdp G$ of a
semilattice by a group. The second author proved in \cite{Sil93} that
this embedding can be assumed to be normal-convex, i.e. every quotient
of $S$ embeds in some quotient of $E \sdp G$. Therefore it is a
natural problem to determine under which conditions a semidirect
product of a semilattice by a group is a Howson inverse semigroup. If the action
of $G$ on $E$ has a fixed point (i.e. if $G\cdot e = \{ e \}$ for some $e
\in E$) then $G$ embeds in $E \sdp G$ and so $G$ being a Howson group
is a necessary condition (cf. Lemma~\ref{isig}).
We note that if $E$ has an identity (maximum) or a
zero (minimum) then such an element is necessarily a fixed point for any
action of a group.

In Section 3, we show that if $E$ is a finite semilattice, then $E\sdp
G$ is a Howson inverse semigroup if and only if $G$ is a Howson
group. 
We also prove a theorem on polynomial bounds, introducing the
concept of a polynomially Howson inverse semigroup.
The main theorem of Section 3 is extended in Section 4 to arbitrary
  semilattices, provided that the group action is locally
  finite. Finally, in Section 
5, examples are produced to show that anything can happen when the
action is not locally finite.

\section{Preliminaries}

Let $E$ be a ($\wedge$-)semilattice and $G$ a group acting on the
left on $E$ via 
the homomorphism $\theta\colon G\to \aut{E}$.
As usual, we write $\theta_g$ instead of $\theta(g)$ and $g\cdot e$
instead of $\theta_g(e)$, 
for any $g\in G$ and $e\in E$.
In particular, $\theta$ being a homomorphism is equivalent to
$\theta_{gh}(e) = \theta_g (\theta_h(e))$, 
for any $g,h \in G$ and $e\in E$, that is, to $(gh)\cdot e=g\cdot(h\cdot e)$.

The action $\theta$ determines the {\em semidirect product} $E\sdp G$, where 
$$(e,g)(f,h)=(e\wedge (g\cdot f),gh),$$ 
for all $(e,g),(f,h)\in E\times G$.
Also recall that $(e,g)^{-1}=(g^{-1}\cdot e,g^{-1})$, for each
$(e,g)$. If the action is trivial, i.e. $\theta_g = id_E$ for every $g
\in G$, then we have the direct product $E \times G$.

Let $\sigma\colon E\sdp G \to E$ and $\gamma\colon E\sdp G \to G$
denote the projections ($\sigma$ of $\sigma$emilattice, $\gamma$ of
$\gamma$roup!); thus, $u=(\sigma(u),\gamma(u))$ whenever $u\in E\sdp G$.
Note that, except when $\theta$ is trivial, only $\gamma$ is a
homomorphism.

For further details on inverse semigroups, the reader is referred to
\cite[Chapter 5]{How95} and \cite{Law99}.

Given an inverse semigroup $S$ and a subset $X\subseteq S$,
we denote by $\gen{X}$ the inverse subsemigroup of $S$
generated by $X$. In particular, if $S$ is a group (respectively
semilattice), $\gen{X}$ is the subgroup (respectively
subsemilattice) of $S$ generated by $X$.
For a finitely generated inverse semigroup $S$,
the \emph{rank} of $S$ is defined as
$$\rk{S} = \min \{ |X| \colon S=\gen{X} \} \, . $$

If $A$ is a finite nonempty alphabet, a finite $A$-{\em automaton} is
a quadruple of 
the form ${\A} = (Q,q_0,T,\Gamma)$, where $Q$ is a finite set (vertices), $q_0
\in Q$, $T \subseteq Q$ and $\Gamma \subseteq Q \times A \times Q$
(edges). A {\em path} in $\A$ is a sequence of the form
\begin{equation}
\label{path}
p_0 \mapright{a_1} p_1 \mapright{a_2} \ldots \mapright{a_n} p_n,
\end{equation}
with $n \geq 1$ and $(p_{i-1},a_i,p_i) \in E$ for $i =
1,\ldots,n$. Note that we are not admitting empty paths in this paper!
The path (\ref{path}) has length $n \geq 1$ and label $a_1a_2\ldots
a_n \in A^+$. It is {\em successful} if $p_0 = q_0$ and $p_n \in
T$. The {\em language} of $\A$, denoted by $L(\A)$, is the
subset of $A^+$ consisting of the labels of all successful paths
in $\A$. A language $L \subseteq A^+$ is {\em rational} if $L =
L(\A)$ for some finite $A$-automaton $\A$.

A finite $A$-automaton ${\A} = (Q,q_0,T,\Gamma)$ is:
\begin{itemize}
\item
{\em deterministic} if $(p,a,q),(p,a,q') \in E$ implies $q = q'$ for
all $p,q,q' \in Q$ and $a \in A$;
\item {\em complete} if for all $p \in Q$ and $a \in A$ there exists
  some $q \in Q$ such that $(p,a,q) \in E$.
\end{itemize}

Let $S$ be a semigroup. We say that $X \subseteq S$ is a {\em rational
  subset} of $S$ if there exists a finite alphabet $A$, a homomorphism
$\varphi\colon A^+ \to S$ and a rational language $L \subseteq A^+$ such
that $X = \varphi(L)$. The following result, proved by Anisimov and
Seifert in \cite{AS75}, will be important for us:

\begin{theorem} \label{anisei}
 Let $G$ be a group and let $H$ be a subgroup of $G$.
 Then $H$ is a rational subset of $G$ if and only if $H$ is finitely generated.
\end{theorem}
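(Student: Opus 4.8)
The plan is to establish the two implications separately. For the easier direction, suppose $H$ is finitely generated, say $H=\gen{h_1,\ldots,h_n}$. I would take an alphabet $A=\{a_1,\ldots,a_n,a_1',\ldots,a_n'\}$, define $\varphi\colon A^+\to G$ by $\varphi(a_i)=h_i$ and $\varphi(a_i')=h_i\inv$, and note that $\varphi(A^+)$ is the subsemigroup of $G$ generated by $\{h_1^{\pm1},\ldots,h_n^{\pm1}\}$. As this set of generators is closed under inversion, that subsemigroup is itself closed under inverses and contains $1$, hence is a subgroup, so it equals $H$. Since $A^+$ is rational, $H=\varphi(A^+)$ is a rational subset. (The trivial subgroup is covered by a one-letter alphabet mapping to $1$.)

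For the substantial direction, suppose $H=\varphi(L)$ with $L=L(\A)$ for a finite $A$-automaton $\A=(Q,q_0,T,\Gamma)$ and a homomorphism $\varphi\colon A^+\to G$. Discarding every vertex that is not reachable from $q_0$ or cannot reach $T$ leaves $L$ unchanged, so I may assume $\A$ is trim. I would fix a spanning tree of the graph of $\A$ rooted at $q_0$ and oriented away from the root, and for each $q\in Q$ let $t_q\in G$ be the image under $\varphi$ of the label of the unique tree path $q_0\to q$, so that $t_{q_0}=1$ and every tree edge $(p,a,q)$ satisfies $t_q=t_p\varphi(a)$. For each edge $e=(p,a,q)\in\Gamma$ set $b_e=t_p\,\varphi(a)\,t_q\inv$; there are finitely many such elements, and every tree edge yields $b_e=1$. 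A telescoping computation then shows that the image in $G$ of any successful path $q_0\mapright{a_1}\cdots\mapright{a_n}t$ with consecutive edges $e_1,\ldots,e_n$ equals $b_{e_1}\cdots b_{e_n}\,t_t$.

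I then propose the finite set $F=\{b_e : e\in\Gamma\}\cup\{t_t : t\in T\}$ as a generating set for $H$. The inclusion $H\subseteq\gen{F}$ is immediate from the telescoping formula, so the heart of the matter is to prove $F\subseteq H$. Each $t_t$ lies in $H$ because the tree path $q_0\to t$ is itself a successful path (with $t_{q_0}=1\in H$, since $H$ is a subgroup). The delicate point --- and the step I expect to be the main obstacle --- is that $b_e=t_p\varphi(a)t_q\inv$ is typically not the image of any single path, because the directed edges of $\A$ cannot be reversed; this is exactly where the subgroup hypothesis must be used. First, for any successful path, with image $h\in H$, the telescoping formula together with $t_t\in H$ gives $b_{e_1}\cdots b_{e_n}=h\,t_t\inv\in H$, so the $b$-product along any successful path lies in $H$. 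Now fix an edge $e=(p,a,q)$; by trimness there is a path $q\to t'$ to a terminal vertex, with $b$-product $\beta$. Prepending the tree path $q_0\to p$ and the edge $e$ to this tail gives a successful path with $b$-product $b_e\beta\in H$, while prepending instead the tree path $q_0\to q$ gives a successful path with $b$-product $\beta\in H$. Hence $b_e=(b_e\beta)\beta\inv\in H$, so $F\subseteq H$ and therefore $H=\gen{F}$ is finitely generated. The crucial use of closure under inverses to cancel the common tail $\beta$ is what renders the subgroup assumption indispensable.
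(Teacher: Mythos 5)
Your proof is correct. Note, however, that the paper does not actually prove Theorem~\ref{anisei}: it is quoted from Anisimov and Seifert with a citation, and the only proof-related material in the paper is Remark~\ref{R:rank-of-Se}, which sketches the standard argument from \cite{BS10} --- a subgroup $H=\rho(L(\B))$ is generated by the images of all words of $\rho^{-1}(H)$ of length less than twice the number of vertices of $\B$, a pumping-style bound that yields a generating set of size exponential in the number of vertices. Your argument takes a genuinely different (spanning-tree) route: after trimming, you conjugate each edge by the tree-path elements $t_p$, kill the tree edges, and use closure of $H$ under inversion to cancel the common tail $\beta$ and conclude that each $b_e$ lies in $H$. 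This is essentially the Stallings-type proof, and it buys a much better bound, namely $\rk{H}\leq |\Gamma|+|T|$, linear in the size of the automaton rather than exponential; had the paper used it, the estimate (\ref{rkbou}) and the polynomial $q$ in Theorem~\ref{polhow} could be sharpened considerably. The details all check out, including the edge cases you flag implicitly: the trivial subgroup in the easy direction, $t_{q_0}=1\in H$ when $q_0\in T$ (where the tree path is empty and the paper forbids empty paths), and the possibility $\beta=1$ when the chosen edge already ends at a terminal vertex. The one place worth a sentence more of care is the first direction, where you should say explicitly that $A^+$ itself is the language of a one-vertex automaton with a loop for each letter and the vertex terminal, so that $A^+$ is rational in the paper's ($\varepsilon$-free) sense.
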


For more details on languages, automata and rational subsets, the
reader is referred to \cite{B79}.

\section{Finite semilattices}

In this section we consider a group $G$ acting on the left on a finite
semilattice $E$.

We start with a useful lemma, proved with some help from automata
theory.

\begin{lma}
\label{newle}
Let $E$ be a finite semilattice and $G$ a group acting on the left on
 $E$ via the homomorphism 
 $\theta\colon G\to {\rm Aut}(E)$.
Let $X$ be a finite nonempty subset of $E\sdp G$ and let $e \in E$.
Let
$$S(e) = \{ u \in \gen{X} \colon \sigma(u) \geq
e,\;\theta_{\gamma(u)} = id_E \}.$$
Then $\gamma(S(e))$ is either empty or a finitely generated subgroup of $G$.
\end{lma}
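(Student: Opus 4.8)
The plan is to handle the two assertions separately: first that $\gamma(S(e))$ is a subgroup of $G$ whenever $S(e)\neq\emptyset$, which is a direct computation, and then that it is finitely generated, which is where automata theory enters through Theorem~\ref{anisei}.

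For the subgroup claim, I would exploit the defining condition $\theta_{\gamma(u)}=id_E$ for $u\in S(e)$. Given $u,v\in S(e)$, this forces $\gamma(u)\cdot\sigma(v)=\sigma(v)$, so the semidirect product formula collapses to $uv=(\sigma(u)\wedge\sigma(v),\gamma(u)\gamma(v))$; hence $\sigma(uv)=\sigma(u)\wedge\sigma(v)\geq e$ and $\theta_{\gamma(uv)}=\theta_{\gamma(u)}\theta_{\gamma(v)}=id_E$. Similarly $u\inv=(\sigma(u),\gamma(u)\inv)$ satisfies $\sigma(u\inv)\geq e$ and $\theta_{\gamma(u\inv)}=id_E$. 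Since $\gen{X}$ is an inverse subsemigroup, both $uv$ and $u\inv$ remain in $\gen{X}$, so $S(e)$ is closed under products and inverses; applying the homomorphism $\gamma$ shows that $\gamma(S(e))$ is closed under products and inverses and contains $1=\gamma(uu\inv)$, hence is a subgroup.

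For finite generation I would prove that $\gamma(S(e))$ is a \emph{rational} subset of $G$; being a subgroup, Theorem~\ref{anisei} then makes it finitely generated. Put $Y=X\cup X\inv$ and let $\varphi\colon Y^+\to E\sdp G$ be the homomorphism evaluating a word as a product, so that $\varphi(Y^+)=\gen{X}$. The key observation is that, reading a word letter by letter, the pair $(\sigma,\theta_\gamma)$ of the partial product updates from the previous pair and the current letter alone, via $\sigma(uy)=\sigma(u)\wedge\theta_{\gamma(u)}(\sigma(y))$ and $\theta_{\gamma(uy)}=\theta_{\gamma(u)}\theta_{\gamma(y)}$. Crucially, because $E$ is finite, $\aut{E}$ is finite, so these pairs range over the finite set $E\times\aut{E}$. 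I would therefore take $\A$ to be the deterministic automaton with vertices $\{q_0\}\cup(E\times\aut{E})$, edges $q_0\mapright{y}(\sigma(y),\theta_{\gamma(y)})$ and $(f,\alpha)\mapright{y}(f\wedge\alpha(\sigma(y)),\alpha\theta_{\gamma(y)})$ for $y\in Y$, and terminal vertices $\{(f,\alpha)\colon f\geq e,\ \alpha=id_E\}$. An induction on word length shows that reading $w$ from $q_0$ ends at $(\sigma(\varphi(w)),\theta_{\gamma(\varphi(w))})$, whence $L(\A)=\{w\in Y^+\colon\varphi(w)\in S(e)\}$ and $S(e)=\varphi(L(\A))$ is rational in $E\sdp G$. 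Composing with $\gamma$ and using that homomorphisms send rational subsets to rational subsets, $\gamma(S(e))=(\gamma\varphi)(L(\A))$ is rational in $G$, and Theorem~\ref{anisei} finishes the argument.

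The main obstacle is to isolate exactly what finite data must be carried in the automaton's states: one cannot record $\gamma$ of the partial product, since $G$ may be infinite, but only its image $\theta_\gamma$ in the finite group $\aut{E}$ together with $\sigma$. The real work is checking that the transition function is well defined (depends only on the current vertex and the letter) and that $L(\A)$ captures $S(e)$ precisely; once this bookkeeping is in place, the passage to $G$ and the appeal to Anisimov--Seifert are routine.
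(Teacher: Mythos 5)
Your proposal is correct and follows essentially the same route as the paper: the same verification that $S(e)$ is an inverse subsemigroup, the same finite automaton on states $\{q_0\}\cup(E\times\aut{E})$ tracking the pair $(\sigma,\theta_\gamma)$ of the partial product, the same induction on word length, and the same appeal to Anisimov--Seifert after pushing forward by $\gamma$. The only difference is cosmetic (you take the alphabet to be $X\cup X\inv$ where the paper assumes $X=X\inv$ and uses symbols $a_x$).
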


\begin{proof}
For simplicity, write $S$ instead of $S(e)$.
Assume that $S \neq \varnothing$. We show first that $S$ is an inverse
subsemigroup of $E \sdp G$.

Suppose that $u,v \in S$. Then
$uv \in \gen{X}$. Moreover,
$$\begin{array}{lll}
(\sigma(uv),\gamma(uv))&=&uv =
(\sigma(u),\gamma(u))(\sigma(v),\gamma(v))\\
&=&(\sigma(u) \wedge
(\gamma(u) \cdot \sigma(v)), \gamma(u)\gamma(v)).
\end{array}$$
Now $\gamma(u) \cdot \sigma(v) = \theta_{\gamma(u)}(\sigma(v)) =
\sigma(v)$, hence
$$\sigma(uv) = (\sigma(u) \wedge \sigma(v)) \geq (e \wedge e) = e$$
and also
$\theta_{\gamma(uv)} = \theta_{\gamma(u)}\theta_{\gamma(v)} =
id_E$. Thus $uv \in S$.

On the other hand, we have also $u\inv \in \gen{X}$ and
$$\begin{array}{lll}
(\sigma(u\inv),\gamma(u\inv))&=&u\inv = ((\gamma(u))\inv \cdot
\sigma(u),(\gamma(u))\inv)\\
&=&(\theta_{\gamma(u)}\inv(\sigma(u)),(\gamma(u))\inv) =
(\sigma(u),(\gamma(u))\inv). 
\end{array}$$
Since $\sigma(u) \geq e$ and $\theta_{(\gamma(u))\inv} =
\theta_{\gamma(u)}\inv = id_E$, we get $u\inv \in S$ and so $S$ is an inverse
subsemigroup of $E \sdp G$.

We show next that $S$ is a rational subset of $E \sdp G$. Assuming
that $X = X\inv$, we introduce
a finite alphabet $A = \{ a_x \colon x \in X\}$. Let $\varphi \colon A^+ \to
E \sdp G$ be the homomorphism defined by $\varphi(a_x) = x$. We define
an $A$-automaton ${\A} = (Q,q_0,T,\Gamma)$ by
$$\begin{array}{lll}
Q&=&\{ q_0\} \discup (E \times \aut{E}),\\
T&=&\{ f \in E \colon f \geq e\} \times \{ id_E \},\\
\Gamma&=&\{ (q_0,a_x,(\sigma(x),\theta_{\gamma(x)}))\colon x \in X\}\\
&\cup&\{ ((f,\pi),a_x,(f\wedge \pi(\sigma(x)),\pi\theta_{\gamma(x)}))
\colon f \in E,\; \pi \in \aut{E},\; x \in X \}.
\end{array}$$ 

It follows from the definition that $\A$ is complete and
deterministic. Thus, for every $v \in A^+$, there exists a unique $q_v
\in Q$ such that $q_0 \mapright{v} q_v$ is a path in $\A$. We
show that
\begin{equation}
\label{newle1}
q_v = (\sigma(\varphi(v)),\theta_{\gamma(\varphi(v))}).
\end{equation}

We use induction on $|v|$. The case $|v| = 1$ being immediate, we
assume that $|v| > 1$ and the claim holds for shorter words. Then we
may write $v = wa_x$ with $x \in X$. By the induction hypothesis, we
have a path
$$q_0 \mapright{w} (\sigma(\varphi(w)),\theta_{\gamma(\varphi(w))}),$$
and also an edge
$$(\sigma(\varphi(w)),\theta_{\gamma(\varphi(w))}) \mapright{a_x}
(\sigma(\varphi(w)) \wedge
\theta_{\gamma(\varphi(w))}(\sigma(x)),
\theta_{\gamma(\varphi(w))}\theta_{\gamma(x)}).$$    
Since
$\varphi(v) = \varphi(w)x =
(\sigma(\varphi(w)),\gamma(\varphi(w)))(\sigma(x),\gamma(x))$ yields
$$\sigma(\varphi(v)) = (\sigma(\varphi(w)) \wedge 
\theta_{\gamma(\varphi(w))}(\sigma(x)))$$
and $\theta_{\gamma(\varphi(v))} =
\theta_{\gamma(\varphi(w))}\theta_{\gamma(x)}$, there exists a path
$$q_0 \mapright{v} (\sigma(\varphi(v)),\theta_{\gamma(\varphi(v))}),$$
in $\A$. Hence (\ref{newle1}) holds for $v$ and therefore for any
word of $A^+$.

We show next that
\begin{equation}
\label{newle2}
\varphi(L({\A})) = S.
\end{equation}

Let $v \in L({\A})$. Since $\varphi(A) = X = X \cup 
X\inv$, we have $\varphi(L({\A})) \subseteq \varphi(A^+) =
\gen{X}$. It follows from the uniqueness of $q_v$ that $q_v \in T$,
hence (\ref{newle1}) yields
$\sigma(\varphi(v)) \geq e$ and $\theta_{\gamma(\varphi(v))} = id_E$. 
Thus $\varphi(v) \in S$ and so $\varphi(L({\A})) \subseteq S$.

Suppose now that $u \in S$. Since $S \subseteq \gen{X}$, we may write
$u = x_1\ldots x_n$ for some $x_i \in X$. Let $v = a_{x_1}\ldots
a_{x_n}$. Since $u \in S$, we have $\sigma(\varphi(v)) = \sigma(u) \geq
e$ and $\theta_{\gamma(\varphi(v))} = \theta_{\gamma(u)} = id_E$. In
view of (\ref{newle1}), we get $q_v \in T$, hence $v \in L({\A})$
and so $u \in \varphi(L({\A}))$. Therefore $S \subseteq
\varphi(L({\A}))$ and so (\ref{newle2}) holds.

It follows that $S$ is a rational inverse subsemigroup of $E \sdp
G$. Composing $\varphi$ with $\gamma$, we deduce that $\gamma(S)$ is a
rational inverse subsemigroup of $G$, i.e. a rational subgroup of $G$. By
Theorem \ref{anisei}, $\gamma(S)$ is finitely generated.
\end{proof}


Before we proceed, we make the following observation.

\begin{rmk} \label{R:rank-of-Se}
 The very proof of Anisimov and Seifert's Theorem
 (or rather, a few proofs among many)
 allows us to draw some conclusions on the rank of $\gamma(S(e))$.
 
 Indeed, if a subgroup $H$ of a group $G$ is rational, we may write
 $H=\rho(L(\B))$ for some (finite) $(B\cup B^{-1})$-automaton $\B$
 and homomorphism $\rho\colon (B \cup B^{-1})^{\ast}\to G$ satisfying
 $\rho(b\inv) = (\rho(b))\inv$ for every $b \in B$. Then $H$ is
 generated by $\rho(Y)$ where $Y$ consists of all words in
 $\rho^{-1}(H)$ 
 of length less than twice the number of vertices of $\B$
 (cf.~\cite[Theorem~3.1]{BS10}). As
 $|Y|\leq \sum_{i=0}^{2m-1} (2|B|)^i,$
 where $m$ denotes the number of vertices of $\B$,
 we have that
 $$\rk{H}\leq |\rho(Y)|\leq \sum_{i=0}^{2m-1} (2|B|)^i =
 \frac{1-(2|B|)^{2m}}{1-2|B|} \, .$$ 

Assume that $S(e) \neq \emptyset$. As $\gamma(S(e))=(\gamma\varphi)(L(\A))$,
 the automaton $\A$ has $$|E|\times |\aut{E}|\leq |E|\times (|E|-1)!=|E|!$$ vertices,
 and its alphabet of labels has size $|X|$, we deduce that
 \begin{equation}
\label{rkbou}
\rk{\gamma(S(e))}\leq \sum_{i=0}^{2|E|!-1} (2|X|)^i = \frac{1-(2|X|)^{2\times|E|!}}{1-2|X|}.
\end{equation}
\end{rmk}

We continue with another lemma.

\begin{lma}
\label{isig}
 Let $G$ be a group acting on the left on a semilattice $E$ via the
 homomorphism $\theta\colon G\to {\rm Aut}(E)$. If $\theta$ has a fixed
 point and $E\sdp G$ is a Howson inverse semigroup, then $G$ is a 
 Howson group. 
\end{lma}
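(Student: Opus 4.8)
The plan is to use the fixed point to embed $G$ as a subgroup of $E\sdp G$ and then transport the Howson property backwards across this embedding. Let $e\in E$ be a fixed point, so that $g\cdot e=e$ for every $g\in G$, and define $\iota\colon G\to E\sdp G$ by $\iota(g)=(e,g)$. A direct computation using the multiplication rule gives $\iota(g)\iota(h)=(e,g)(e,h)=(e\wedge(g\cdot e),gh)=(e,gh)=\iota(gh)$, so $\iota$ is a homomorphism, and it is plainly injective since the second coordinate recovers $g$. Thus $\iota$ identifies $G$ with the subgroup $H=\{(e,g)\colon g\in G\}$ of $E\sdp G$, a group with identity $(e,1)$.

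Next I would take two finitely generated subgroups $A$ and $B$ of $G$ and check that their images $\iota(A)$ and $\iota(B)$ are finitely generated inverse subsemigroups of $E\sdp G$. If $A=\gen{a_1,\ldots,a_m}$ as a subgroup, then, because $\iota$ is a homomorphism, $\iota(A)$ is precisely the inverse subsemigroup $\gen{\iota(a_1),\ldots,\iota(a_m)}$ of $E\sdp G$: the needed identity and inverses are already produced inside this set, for instance $\iota(a_1)\iota(a_1)\inv=\iota(a_1a_1\inv)=(e,1)$, so the inverse subsemigroup generated coincides with the subgroup $\iota(A)$. The same holds for $B$. Invoking the hypothesis that $E\sdp G$ is a Howson inverse semigroup, the intersection $\iota(A)\cap\iota(B)$ is then a finitely generated inverse subsemigroup.

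The key identification is that $\iota(A)\cap\iota(B)=\iota(A\cap B)$, which is immediate from the injectivity of $\iota$: an element lies in the intersection exactly when it has the form $(e,g)$ with $g\in A$ and $g\in B$. Since $\iota$ restricts to an isomorphism of $A\cap B$ onto $\iota(A\cap B)$, a finite inverse-semigroup generating set for $\iota(A\cap B)$ pulls back through $\iota\inv$ to a finite generating set for $A\cap B$, proving that $A\cap B$ is finitely generated.

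I do not expect a genuine obstacle here, as the whole argument is simply the transport of the Howson property through an embedding; the only point that requires care is the interplay between inverse-subsemigroup generation in $E\sdp G$ and subgroup generation in $G$. Since the relevant inverse subsemigroups all lie inside the group $H$, where products of elements and their inverses already form subgroups, the two notions of finite generation coincide, and the passage back and forth along $\iota$ is harmless.
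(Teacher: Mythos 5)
Your proposal is correct and follows essentially the same route as the paper: embed $G$ as the inverse subsemigroup $\{e\}\times G$ of $E\sdp G$ via the fixed point $e$, note that inverse subsemigroups of a group are exactly its subgroups, and transport the Howson property back through the isomorphism. The paper's proof is just a more condensed version of the same argument.
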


\begin{proof}
Let $e \in E$ be such that $g\cdot e = e$ for every $g \in G$. Then $G$ is
isomorphic to the inverse subsemigroup $\{ e \} \times G$ of $E \sdp
G$. Since $E\sdp G$ is a Howson inverse semigroup, so is $G$. Since
the inverse subsemigroups of a group are its subgroups, $G$ is a
Howson group.
\end{proof}

Now we can prove the main result of this section.

\begin{theorem} \label{T:howson}
 Let $E$ be a finite semilattice and $G$ a group acting on the left on
 $E$ via the homomorphism 
 $\theta\colon G\to {\rm Aut}(E)$. Then the following conditions are
 equivalent:
\begin{itemize}
\item[(i)] $E \sdp G$ is a Howson inverse semigroup;
\item[(ii)] $G$ is a Howson group.
\end{itemize}
\end{theorem}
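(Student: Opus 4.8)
The plan is to prove the equivalence by establishing each implication separately.

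First I would prove the direction $(i)\Rightarrow(ii)$, which should be the easy half. Since $E$ is a finite semilattice, it has a minimum element (the meet of all its elements), and any minimum is necessarily a fixed point for the action $\theta$, as already observed in the introduction. Therefore Lemma~\ref{isig} applies directly: if $E\sdp G$ is a Howson inverse semigroup, then $G$ is a Howson group. This takes essentially one line.

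The substantive direction is $(ii)\Rightarrow(i)$. Assume $G$ is a Howson group; I must show that the intersection of two finitely generated inverse subsemigroups $\gen{X}$ and $\gen{Y}$ of $E\sdp G$ is again finitely generated. The natural strategy is to decompose the intersection according to the two coordinates $\sigma$ and $\gamma$. A typical element $u=(\sigma(u),\gamma(u))$ lies in $\gen{X}\cap\gen{Y}$, so its $E$-component is some $f\in E$ and its automorphism $\theta_{\gamma(u)}$ is some $\pi\in\aut E$. Since $E$ is finite, there are only finitely many possible pairs $(f,\pi)\in E\times\aut E$, so I would partition the intersection into finitely many pieces indexed by these pairs (or by a finer set of invariants), and show each piece contributes only finitely many generators. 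The key tool is Lemma~\ref{newle}: for a fixed $e\in E$, the set $S(e)=\{u\in\gen{X}\colon\sigma(u)\geq e,\ \theta_{\gamma(u)}=id_E\}$ has $\gamma(S(e))$ finitely generated, and by hypothesis $G$ is Howson so intersections of such finitely generated subgroups coming from $X$ and from $Y$ remain finitely generated in $G$.

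The hard part will be lifting finite generation in the group coordinate back to finite generation of the actual intersection in $E\sdp G$, and in correctly handling the elements whose $\gamma$-image does not act as the identity. The clean approach is to reduce to the case $\theta_{\gamma(u)}=id_E$ by a covering/stabilizer argument: the elements acting trivially form an inverse subsemigroup of finite index in a suitable sense (since $\aut E$ is finite), so one can first pass to those, intersect the corresponding finitely generated subgroups of $G$ using the Howson hypothesis, pull the finitely many resulting group generators back to elements of $E\sdp G$ by pairing them with appropriate idempotent $E$-components, and then append the finitely many coset representatives to recover the full intersection. Because $E$ is finite every auxiliary index set is finite, so the union of finitely many finitely generated pieces is again finitely generated; the delicate points are verifying that the chosen lifts actually generate everything in the intersection and not merely the trivially-acting part, and checking that the idempotent components combine correctly under the semidirect product multiplication $(e,g)(f,h)=(e\wedge(g\cdot f),gh)$.
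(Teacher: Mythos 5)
Your proposal follows essentially the same route as the paper's proof: the forward direction via the zero of the finite semilattice being a fixed point and Lemma~\ref{isig}, and the converse via a partition of the intersection indexed by the finitely many pairs $(e,\pi)\in E\times\aut{E}$, with Lemma~\ref{newle} and the Howson hypothesis on $G$ supplying finitely generated subgroups $\gamma(S_1(e,\pi))\cap\gamma(S_2(e,\pi))$ whose generators are lifted by pairing with the idempotent $\pi\inv(e)$, together with one representative $w_{(e,\pi)}$ per realized pair. The delicate points you flag are exactly the ones the paper resolves (in particular, showing $(\pi\inv(e),1)\in\gen{X_i}$ so that the lifted generators land in both subsemigroups, and the factorization $u=w_{(e,\pi)}(\pi\inv(e),h\inv g)$), and they go through as you anticipate.
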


\begin{proof}
(i) $\Rightarrow$ (ii). Since $E$ is finite, it has a zero
  $0$, which is necessarily a fixed point of $\theta$. Thus we may
  apply Lemma \ref{isig}.

(ii) $\Rightarrow$ (i). Let $X_1,X_2 \subseteq E\sdp G$ be finite. 
We build a finite generating set $X$ for $\gen{X_1}\cap \gen{X_2}$ as follows.

For $i = 1,2$, let 
$$P_i = \{ (\sigma(u), \theta_{\gamma(u)}) \colon u \in \gen{X_i} \}
\subseteq E \times \aut{E}.$$
For all $(e,\pi) \in P_1 \cap P_2$ and $i = 1,2$, let
$$\begin{array}{lll}
S_i(e,\pi)&=&\{ u \in \gen{X_i} \colon \sigma(u) \geq
\pi\inv(e),\;\theta_{\gamma(u)} = id_E \},\\ 
S'_i(e,\pi)&=&\{ u \in \gen{X_i} \colon \sigma(u) =
e,\;\theta_{\gamma(u)} = \pi \},\\
S'(e,\pi)&=&S'_1(e,\pi) \cap S'_2(e,\pi),\\
H(e,\pi)&=&\gamma(S_1(e,\pi)) \cap \gamma(S_2(e,\pi)).
\end{array}$$
Write also
$$\begin{array}{l}
P = \{ (e,\pi) \in P_1 \cap P_2 \colon S'(e,\pi) \neq \emptyset \},\\
Q = \{ (e,\pi) \in P_1 \cap P_2 \colon H(e,\pi) \neq \emptyset \}.
\end{array}$$
By Lemma \ref{newle}, $\gamma(S_i(e,\pi))$ is a finitely generated
subgroup of $G$ for all $(e,\pi) \in Q$ and $i = 1,2$.

Now, since $G$ is a Howson group, then $H(e,\pi)$ is also a finitely
generated subgroup of $G$ for 
every $(e,\pi) \in Q$. Let $Y(e,\pi)$ denote a finite generating set
(closed under inversion) of $H(e,\pi)$, and
let $X(e,\pi)=\{\pi^{-1}(e)\}\times Y(e,\pi)$.


Finally, for every $(e,\pi) \in P$, we fix some element
$w_{(e,\pi)}\in S'(e,\pi)$. We claim that the finite set
\begin{equation} \label{gen-set}
 X = \Big( \bigcup_{(e,\pi)\in Q} X(e,\pi)\Big) \cup
    \{ w_{(e,\pi)} \colon (e,\pi)\in P \}
\end{equation}
generates $\gen{X_1}\cap \gen{X_2}$.
 
Let $(e,\pi) \in Q$ and $(\pi\inv(e),g) \in X(e,\pi)$. Let $i \in \{ 1,2\}$.
Since $(e,\pi) \in Q \subseteq P_i$, we have $(e,\pi) =
 (\sigma(u_i), \theta_{\gamma(u_i)})$ for some $u_i \in
\gen{X_i}$. Writing $h_i = \gamma(u_i)$, we have $(e,h_i) \in
\gen{X_i}$, hence $(h_i\inv\cdot e,h_i\inv) = (e,h_i)\inv \in
\gen{X_i}$. Since
$$\pi\inv(e) = \theta_{\gamma(u_i)}\inv(e) = \theta_{h_i}\inv(e) =
\theta_{h_i\inv}(e) = h_i\inv\cdot e,$$
we get
\begin{equation}
\label{np2}
(\pi\inv(e),1) = (h_i\inv\cdot e,1) = (h_i\inv\cdot
e,h_i\inv)(e,h_i) \in \gen{X_i}.
\end{equation}

Now $(\pi\inv(e),g) \in X(e,\pi)$ yields $g \in
\gamma(S_i(e,\pi))$, hence there exists some $v_i \in S_i(e,\pi)
\subseteq \gen{X_i}$ such
that $g = \gamma(v_i)$. Moreover,
$\sigma(v_i) \geq \pi\inv(e)$.
Writing $f = \sigma(v_i)$, it follows that
$(f,g) = (\sigma(v_i),\gamma(v_i)) = v_i \in \gen{X_i}$.  
Since $f = \sigma(v_i) \geq
\pi\inv(e)$, and in view of (\ref{np2}), we get
$$(\pi\inv(e),g) = (\pi\inv(e),1)(f,g) \in \gen{X_i}.$$
Since $i \in \{ 1,2\}$ is arbitrary, it follows that $X(e,\pi)
\subseteq \gen{X_1}\cap \gen{X_2}$ for every $(e,\pi) \in Q$.

On the other hand, it follows from the definitions that $S'(e,\pi)
\subseteq \gen{X_1}\cap \gen{X_2}$, hence $w_{(e,\pi)} \in
\gen{X_1}\cap \gen{X_2}$ for every $(e,\pi) \in P$. Therefore $X
\subseteq \gen{X_1}\cap \gen{X_2}$.

Conversely, suppose that $u\in \gen{X_1}\cap \gen{X_2}$. Write $e =
\sigma(u)$, $g = \gamma(u)$ and $\pi = \theta_g$. Then $u \in
S'(e,\pi)$, hence $(e,\pi) \in P$ and so $w = w_{(e,\pi)} \in X$. Thus
we may write $w = 
(e,h)$ for some $h \in G$ satisfying $\theta_h = \pi =
\theta_g$. Therefore $\theta_{h\inv g} = id_E$. 

On the other hand,
$$(\pi\inv(e),h\inv g) = (\pi\inv(e),h\inv)(e,g) = (e,h)\inv(e,g) =
w\inv u \in \gen{X_1} \cap \gen{X_2},$$ 
yields
$$(\pi\inv(e),h\inv g) \in S_1(e,\pi) \cap S_2(e,\pi).$$
Thus $h\inv g \in H(e,\pi)$ and so we may write $h\inv g = y_1\ldots
y_n$ for some $y_j \in Y(e,\pi)$, yielding $(\pi\inv(e),y_j) \in
X(e,\pi)$ for $j = 1,\ldots,n$. We show that
\begin{equation}
\label{np3}
(\pi\inv(e),h\inv g) = (\pi\inv(e),y_1)\ldots (\pi\inv(e),y_n).
\end{equation}
Indeed, $y_j \in Y(e,\pi) \subseteq \gamma(S_1(e,\pi)) \cap
\gamma(S_2(e,\pi))$ implies that $\theta_{y_j} = id_E$ for every $j$,
hence (\ref{np3}) follows from the decomposition $h\inv g = y_1\ldots
y_n$. 

Therefore 
$$u = (e,g) = (e,h)(h\inv \cdot e,h\inv g) = w(\pi\inv(e),h\inv g) \in
\gen{X}$$
and so $\gen{X} = \gen{X_1}\cap\gen{X_2}$ as claimed. Therefore
$\gen{X_1}\cap\gen{X_2}$ is finitely generated and so $E\sdp G$ is a
Howson inverse semigroup.
\end{proof}

\medskip

In his 1954 paper on the intersection of finitely generated free groups~\cite{H54},
Howson also provided an upper bound on the rank of $H_1\cap H_2$
in terms of the ranks of the (nontrivial) subgroups $H_1$ and $H_2$, namely:
\[
 \rk{H_1\cap H_2}\leq 2\,\rk{H_1}\rk{H_2}-\rk{H_1}-\rk{H_2}+1 \, .
\]
Two years later, Hanna Neumann~\cite{N56} improved this upper bound to
\[
 \rk{H_1\cap H_2}\leq 2(\rk{H_1}-1)(\rk{H_2}-1)+1
\]
and conjectured that the factor $2$ could in fact be removed from the
inequality, 
in what would become known as the ``Hanna Neumann Conjecture''.
In its full generality, the conjecture would only be proved in 2012,
independently by Friedman~\cite{F14} and Mineyev~\cite{M13}.

We say that an inverse semigroup $S$ is {\em polynomially Howson} if
  there exists a polynomial $p(x) \in \mathbb{R}[x]$ such that 
$$\rk{T_1},\rk{T_2} \leq n \Rightarrow \rk{T_1\cap T_2} \leq p(n)$$
for all inverse subsemigroups $T_1,T_2$ of $S$ and $n \in \mathbb{N}$.
If $p(x)$ can be taken to be quadratic, we say that $S$ is {\em
  quadratically Howson}. Note that by \cite{H54} free groups are
quadratically Howson.

\begin{theorem}
\label{polhow}
Let $E$ be a finite semilattice and $G$ a polynomially Howson group
acting on the left on $E$ via the homomorphism 
 $\theta\colon G\to {\rm Aut}(E)$. Then $E \sdp G$ is polynomially Howson.
\end{theorem}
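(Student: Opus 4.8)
The plan is to reuse the explicit generating set $X$ constructed in the proof of Theorem~\ref{T:howson}, and to bound its cardinality by a polynomial in $n := \max\{\rk{T_1},\rk{T_2}\}$; since $\rk{T_1\cap T_2}\leq |X|$, this is all that the definition of polynomially Howson requires. So let $T_1,T_2$ be inverse subsemigroups of $E\sdp G$ with $\rk{T_i}\leq n$, and fix generating sets $X_i$ with $|X_i|=\rk{T_i}\leq n$, so that $T_i=\gen{X_i}$. As in Lemma~\ref{newle} we may assume $X_i=X_i\inv$ at the cost of replacing $X_i$ by a set of size at most $2n$; this is the input to the construction~(\ref{gen-set}).

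First I would count the pieces of $X$. By~(\ref{gen-set}),
$$|X|\leq \sum_{(e,\pi)\in Q}|X(e,\pi)|+|P|.$$
Both $P$ and $Q$ are contained in $P_1\cap P_2\subseteq E\times\aut{E}$, so $|P|,|Q|\leq |E|\cdot|\aut{E}|\leq |E|!$, a constant depending only on the fixed semilattice $E$. Moreover $|X(e,\pi)|=|Y(e,\pi)|\leq 2\,\rk{H(e,\pi)}$, taking $Y(e,\pi)$ closed under inversion. Thus the whole problem reduces to bounding $\rk{H(e,\pi)}$ polynomially in $n$, uniformly in $(e,\pi)$.

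For this I would combine the polynomial Howson property of $G$ with the estimate in Remark~\ref{R:rank-of-Se}. Since $H(e,\pi)=\gamma(S_1(e,\pi))\cap\gamma(S_2(e,\pi))$ is an intersection of two subgroups of $G$, polynomial Howson-ness yields $\rk{H(e,\pi)}\leq p(m)$ for any common natural upper bound $m$ on $\rk{\gamma(S_i(e,\pi))}$. Now $S_i(e,\pi)$ is precisely the set $S(\pi\inv(e))$ of Lemma~\ref{newle} built from $X_i$, so the bound~(\ref{rkbou}) applies with $|X|$ replaced by $|X_i|\leq 2n$, giving
$$\rk{\gamma(S_i(e,\pi))}\leq \sum_{j=0}^{2|E|!-1}(4n)^j =: q(n).$$
The only real subtlety I foresee is recognizing that, although this quantity is enormous and its degree $2|E|!-1$ grows like $|E|!$, it is nevertheless a genuine \emph{polynomial} in $n$, because $E$ is fixed throughout the statement. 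Hence one may take $m=\lceil q(n)\rceil$, and $\rk{H(e,\pi)}\leq p(\lceil q(n)\rceil)$.

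Finally I would compose the estimates. Putting everything together,
$$\rk{T_1\cap T_2}\leq |X|\leq |E|!\,\big(2\,p(\lceil q(n)\rceil)+1\big),$$
and, after bounding $\lceil q(n)\rceil$ by $q(n)+1$ and assuming (as we may) that $p$ is eventually nondecreasing with positive leading coefficient, the right-hand side is a polynomial in $n$, say $\tilde p(n)$. This $\tilde p$ depends only on $E$ and on the Howson polynomial of $G$, hence serves all pairs $T_1,T_2$ simultaneously, which is exactly what is needed. Therefore $E\sdp G$ is polynomially Howson.
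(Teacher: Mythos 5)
Your proposal is correct and follows essentially the same route as the paper: both reuse the generating set (\ref{gen-set}) from the proof of Theorem~\ref{T:howson}, bound $|P|$ and $|Q|$ by the constant $|E|!$, bound $\rk{\gamma(S_i(e,\pi))}$ via Remark~\ref{R:rank-of-Se}, and then apply the Howson polynomial of $G$ and compose the resulting polynomials. Your bookkeeping of the factors of $2$ arising from closing the generating sets under inversion is, if anything, slightly more careful than the paper's.
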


\begin{proof}
Let $p(x) \in \mathbb{R}[x]$ be such that 
$$\rk{H_1},\rk{H_2} \leq n \Rightarrow \rk{H_1\cap H_2} \leq p(n)$$
for all subgroups $H_1,H_2$ of $G$ and $n \in \mathbb{N}$.

Let $X_1,X_2 \subseteq E\sdp G$ have at most $n$ elements. Write
$T_i=\gen{X_i}$ for $i = 1,2$. By the proof of Theorem \ref{T:howson},
 their intersection $T_1\cap T_2$ is generated by $X=Y\cup W$, where
$$Y=\bigcup_{(e,\pi)\in Q} X(e,\pi) \quad \text{ and } \quad
    W=\{ w_{(e,\pi)} \colon (e,\pi)\in P \}$$
(cf.~(\ref{gen-set})).
Since $P,Q\subseteq E\times \aut{E}$, we have $|P|,|Q|\leq |E|!$.
Thus, in particular, $|W|\leq |E|!$.
Moreover, for each $(e,\pi)\in Q$, we have
$|X(e,\pi)|=|Y(e,\pi)|=\rk{\gamma(S_1(e,\pi)) \cap \gamma(S_2(e,\pi))}$,
where, by Remark~\ref{R:rank-of-Se},
$$\rk{\gamma(S_i(e,\pi))}\leq q(2\rk{T_i}) \leq q(2n)$$
for $q(x) = \sum_{j=0}^{2\times|E|!-1} x^j$.

Thus,
$$\rk{\gamma(S_1(e,\pi)) \cap \gamma(S_2(e,\pi))} \leq
p(q(2n))$$
and so
$$\rk{T_1\cap T_2}\leq |E|!(1+p(q(2n))).$$
Therefore $S$ is polynomially Howson.
\end{proof}

\begin{cor}
\label{fgpolhow}
Let $E$ be a finite semilattice and $G$ a group
acting on the left on $E$ via the homomorphism 
 $\theta\colon G\to {\rm Aut}(E)$. If $G$ is the fundamental group of
a finite graph of groups with virtually polycyclic vertex groups
and finite edge groups, then $E \sdp G$ is polynomially Howson.
\end{cor}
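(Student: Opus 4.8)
The plan is to reduce the statement to Theorem~\ref{polhow}. Since $E$ is finite, that theorem reduces everything to a single fact about $G$ alone: it suffices to prove that a group $G$ of the stated form is \emph{polynomially} Howson (in the subgroup sense used in the proof of Theorem~\ref{polhow}, which for a group coincides with the inverse-semigroup notion). The genuine content of the corollary is therefore to upgrade the qualitative statement of \cite[Theorem~3.10]{ASS14} --- that such a $G$ is a Howson group --- to a single polynomial $p_G$ bounding $\rk{H_1 \cap H_2}$ in terms of $\max\{\rk{H_1},\rk{H_2}\}$, uniformly over all finitely generated subgroups $H_1,H_2$ of $G$. Once $p_G$ is in hand, Theorem~\ref{polhow} immediately produces a polynomial witnessing that $E \sdp G$ is polynomially Howson.

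First I would dispose of the vertex groups. A virtually polycyclic group has finite Pr\"ufer rank: there is a constant $d$ such that every finitely generated subgroup is generated by at most $d$ elements. Hence a virtually polycyclic group is polynomially Howson in the trivial way, via the \emph{constant} polynomial $p(x)=d$, since $\rk{H_1 \cap H_2}\leq d$ for all subgroups. The same uniform bound $d$ will apply to all the groups arising below as intersections of conjugates of the vertex groups, since these are again virtually polycyclic.

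Next I would invoke Bass--Serre theory, following the argument of \cite{ASS14}. The group $G$ acts on its Bass--Serre tree $T$ with finite edge stabilizers and virtually polycyclic vertex stabilizers, and $G\backslash T$ is the finite underlying graph of the graph of groups. A finitely generated subgroup $H\leq G$ is the fundamental group of the graph of groups carried by the finite quotient graph $H\backslash T_H$, where $T_H$ is the minimal $H$-invariant subtree of $T$; its vertex groups are intersections of $H$ with conjugates of the $G_v$ (virtually polycyclic, rank $\leq d$), its edge groups are intersections of $H$ with conjugates of the finite $G_e$, and $\rk{H}$ is controlled, above and below, by the number of vertices and edges of $H\backslash T_H$ together with the uniformly bounded ranks of the vertex groups. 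The key combinatorial step is that the quotient graph attached to $H_1\cap H_2$ embeds in a fibre product (pullback) of those attached to $H_1$ and $H_2$, so that its number of vertices and edges is bounded by the product of the corresponding numbers for $H_1$ and $H_2$; since each of those numbers is in turn bounded linearly in $\rk{H_i}$ and each vertex-group contribution is bounded by $d$, the rank of $H_1\cap H_2$ is bounded by a bilinear --- hence polynomial --- expression in $\rk{H_1}$ and $\rk{H_2}$, and a fortiori by a polynomial in $n=\max\{\rk{H_1},\rk{H_2}\}$.

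The hard part will be precisely this last estimate: making the finiteness argument of \cite[Theorem~3.10]{ASS14} quantitative, i.e. checking that the passage from the two quotient graphs of groups to that of the intersection inflates the number of cells only polynomially, and that the vertex-group intersections contribute only the bounded amount $d$ rather than something growing with $n$. The finite edge groups are what makes the edge-contribution negligible, and the finite Pr\"ufer rank of the vertex groups is what keeps the vertex-contribution constant; the residual work is the bookkeeping of the pullback of the two $H_i$-actions on $T$. With $p_G$ thereby established, applying Theorem~\ref{polhow} to $G$ completes the proof that $E \sdp G$ is polynomially Howson.
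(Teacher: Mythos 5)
Your reduction to Theorem~\ref{polhow} is exactly the paper's first (and only) move, and the overall statement you set out to prove about $G$ alone is the right one. Where you diverge is in believing that the ``genuine content'' of the corollary is to upgrade \cite[Theorem 3.10]{ASS14} from a qualitative Howson statement to a polynomial bound. In fact that theorem, as proved in \cite{ASS14}, already comes with an explicit quadratic bound: the paper's proof is the one-line observation that $G$ is \emph{quadratically} Howson by \cite[Theorem 3.10]{ASS14}, followed by an appeal to Theorem~\ref{polhow}. (The introduction of the present paper paraphrases the result only qualitatively, which is presumably what misled you.) So the Bass--Serre analysis you sketch is not needed; it is essentially a re-derivation of the cited result.

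Taken on its own terms as a self-contained argument, your sketch has a genuine gap at precisely the step you flag as ``the hard part.'' The assertion that the number of vertices and edges of $H\backslash T_H$ is bounded linearly in $\rk{H}$ is not automatic: a finitely generated subgroup acting on the Bass--Serre tree can a priori have a minimal quotient graph with many cells carrying trivial or redundant vertex groups, and controlling this requires an accessibility-type argument (this is where the finiteness of the edge groups is really used, and it is one of the substantive points of \cite{ASS14}). Likewise, the claim that the quotient graph of $H_1\cap H_2$ embeds in a fibre product of the two quotient graphs, with cell count bounded by the product, is the correct idea but is stated without proof, and the passage from ``bilinear in the cell counts'' to ``polynomial in $n=\max\{\rk{H_1},\rk{H_2}\}$'' depends on the unproven linear estimates. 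Either cite the quantitative form of \cite[Theorem 3.10]{ASS14} directly --- which collapses your proof to the paper's --- or carry out these estimates in full; as written, the argument does not close.
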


\begin{proof}
By \cite[Theorem 3.10]{ASS14}, $G$ is quadratically Howson, hence we
may apply Theorem \ref{polhow}.
\end{proof}


\section{Locally finite actions}

Let $E$ be a semilattice and $G$ a group acting on $E$. Given a
subgroup $H \leq G$ and $e \in E$, we say that $H\cdot e = \{ h\cdot e
\colon h \in H\}$ is the $H$-{\em orbit} of $e$. The action of $G$ on
$E$ is said to be \emph{locally finite} if all the $H$-orbits are
finite whenever $H$ is a finitely generated subgroup of $G$.

We can extract from Theorem \ref{T:howson} the following corollary.

\begin{cor}
\label{lofi}
Let $E$ be a semilattice and $G$ a group acting on the left on
 $E$ by means of a locally finite action $\theta\colon G\to
{\rm Aut}(E)$. Then the following conditions are equivalent:
\begin{itemize}
\item[(i)] $E \sdp G$ is a Howson inverse semigroup;
\item[(ii)] $G$ is a Howson group.
\end{itemize}
\end{cor}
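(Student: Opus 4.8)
The plan is to reduce the locally finite case to the finite case already established in Theorem \ref{T:howson}. The key observation is that the finiteness hypothesis on $E$ in Theorem \ref{T:howson} is never used globally; what matters in the proof of that theorem is only that the relevant orbits and sets of automorphisms $\theta_{\gamma(u)}$ are finite. So the strategy is to show that, given two finitely generated inverse subsemigroups $\gen{X_1}$ and $\gen{X_2}$ of $E \sdp G$, everything takes place inside a finite subsemilattice on which $G$ (or a suitable finitely generated subgroup) acts.

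\medskip

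First I would prove the direction (i) $\Rightarrow$ (ii), which is immediate from Lemma \ref{isig}: a locally finite action need not have a fixed point in general, but I will need to confirm this direction carefully. If $E$ has a zero the argument is as before; otherwise I would pass to a finitely generated subgroup and its finite orbits to locate a fixed point, or invoke the $H$-orbit of a single element. For (ii) $\Rightarrow$ (i), the main step is the reduction. Given finite $X_1, X_2 \subseteq E \sdp G$, let $H = \gen{\gamma(X_1 \cup X_2)}$, a finitely generated subgroup of $G$. Let $E_0 \subseteq E$ be the subsemilattice generated by the finite set $H \cdot \sigma(X_1 \cup X_2)$; since the action is locally finite, $H \cdot \sigma(X_1 \cup X_2)$ is finite (it is a finite union of finite $H$-orbits), and hence the semilattice $E_0$ it generates under $\wedge$ is also finite. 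One checks that $E_0$ is invariant under the action of $H$, so $H$ acts on the finite semilattice $E_0$, and both $\gen{X_1}$ and $\gen{X_2}$ are contained in $E_0 \sdp H$.

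\medskip

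Once this reduction is in place, $G$ being a Howson group implies $H$ is a Howson group (intersections of finitely generated subgroups of $H$ are intersections in $G$, hence finitely generated, and they lie in $H$). Thus Theorem \ref{T:howson} applies to the finite semilattice $E_0$ and the Howson group $H$, giving that $E_0 \sdp H$ is a Howson inverse semigroup. Therefore $\gen{X_1} \cap \gen{X_2}$, computed inside $E_0 \sdp H$, is finitely generated; and since $E_0 \sdp H$ is a subsemigroup of $E \sdp G$, this intersection coincides with the intersection computed in $E \sdp G$. Hence $E \sdp G$ is a Howson inverse semigroup.

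\medskip

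The step I expect to be the main obstacle is verifying that $E_0$ is genuinely finite and $H$-invariant, i.e.\ that the finitely generated subgroup $H$ has finite orbits on the generating elements \emph{and} that closing under the meet operation $\wedge$ keeps the set finite while preserving invariance under $\theta_h$ for $h \in H$. Invariance follows because each $\theta_h$ is a semilattice automorphism, so it permutes orbits and commutes with $\wedge$; finiteness of $E_0$ follows from the fact that the meet-closure of a finite subset of a semilattice is finite. I would also need to check compatibility of the generated inverse subsemigroups across the embedding $E_0 \sdp H \hookrightarrow E \sdp G$, ensuring no elements of $\gen{X_1} \cap \gen{X_2}$ escape $E_0 \sdp H$; this is where the explicit description $u = (\sigma(u), \gamma(u))$ with $\sigma(u) \in E_0$ and $\gamma(u) \in H$ must be confirmed for all products of the generators.
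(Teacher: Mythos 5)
Your proposal is correct and takes essentially the same approach as the paper: for (ii) $\Rightarrow$ (i) the paper likewise forms $H=\gen{\gamma(X_1\cup X_2)}$ and the finite $H$-invariant subsemilattice generated by $H\cdot\sigma(X_1\cup X_2)$, then applies Theorem~\ref{T:howson}. For (i) $\Rightarrow$ (ii), where your write-up is only a sketch, the paper does exactly what you indicate: given finitely generated $H,H'\leq G$ it sets $G'=\gen{H\cup H'}$, restricts the action to the finite subsemilattice generated by a single orbit $G'\cdot e$, and uses the zero of that finite subsemilattice as the fixed point required by Lemma~\ref{isig}, concluding that $G'$ (hence $H\cap H'$) behaves as needed.
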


\begin{proof}
(i) $\Rightarrow$ (ii). Let $H,H'$ be finitely generated subgroups of
  $G$ and let $G' = \langle H \cup H'\rangle \leq G$. We fix some $e
  \in E$. Since $\theta$ is locally finite, $G'\cdot e$ is a finite subset of
  $E$. Let $E'$ denote the (finite)
  subsemilattice of $E$ generated by $G'\cdot e$. We claim that
  $G'\cdot E' \subseteq E'$. 

Indeed, the elements of $E'$ are of the form 
$(g'_1\cdot e) \wedge \ldots \wedge (g'_n\cdot e)$, 
with $g'_i \in G'$. If $g' \in G'$, then
$$g'\cdot ((g'_1\cdot e) \wedge \ldots \wedge (g'_n\cdot e)) =
(((g'g'_1)\cdot e) \wedge \ldots \wedge ((g'g'_n)\cdot e)) \in E'$$
and so $G'\cdot E' \subseteq E'$.

Let $\theta'\colon G' \to \aut{E'}$ be defined by $\theta'_{g'}
= \theta_{g'}|_{E'}$ for $g' \in G'$.  
Since $G'\cdot E' \subseteq E'$ and
$(\theta'_{g'})\inv = \theta'_{(g')\inv}$, it is easy to check that $\theta'$ is
a well-defined group homomorphism. Moreover, there is a natural
embedding of $E' \ast_{\theta'} G'$ into $E \sdp G$.
Thus $E' \ast_{\theta'} G'$ is a Howson inverse semigroup. 

Since $E'$ is finite, it has a zero
  $0$, which is necessarily a fixed point of $\theta'$. By Lemma
\ref{isig}, $G'$ is a Howson group. Since $H,H' \subseteq G'$, it
follows that $H\cap H'$ is finitely generated. Therefore $G$ is a
Howson group.

(ii) $\Rightarrow$ (i). Let $X_1,X_2$ be two finite nonempty subsets
of $E\sdp G$. We may 
assume that $X_i = X_i\inv$ for $i = 1,2$. Let 
$$Y = \{ \gamma(x) \colon x \in X_1 \cup X_2 \},\quad F = \{
\sigma(x) \colon x \in X_1 \cup X_2 \}.$$
Let $H$ be the subgroup of $G$ generated by $Y$. Since $G$ is a Howson
group, so is $H$. Let
$E'$ be the subsemilattice of $E$ generated by $H\cdot F = \{ h\cdot f \colon h
\in H,\; f \in F \}$.

Since $\theta$ is locally finite, $H\cdot F$ is a finite subset of
$E$. Since finitely generated semilattices are finite, it follows that
$E'$ is finite. An argument as the one above shows that $H\cdot E'
\subseteq E'$.  


Again as in the proof of the direct implication,
$\theta$ induces an 
action $\theta'\colon H \to \aut{E'}$ and we may view $E' \ast_{\theta'} H$
as an inverse subsemigroup of $E \sdp G$.  

Now we note that $X_i \subseteq F \times Y \subseteq E' \ast_{\theta'} H$ implies
$\gen{X_i} \subseteq E' \ast_{\theta'} H$ for $i = 1,2$. Since $E'$ is
finite and $H$ is Howson, we may use Theorem \ref{T:howson} to deduce
that $\gen{X_1} 
\cap \gen{X_2}$ is finitely generated. Therefore $E \sdp G$ is a
Howson inverse semigroup.
\end{proof}

We discuss now some examples.

\begin{expl}
A trivial example of a locally finite action is that of a trivial action,
 that is, an action in which $g\cdot e=e$, for all $e\in E$ and
 $g\in G$. Therefore the direct product of a semilattice by a Howson group is
 always a Howson inverse semigroup.
\end{expl}

If $G$ is a {\em locally finite} group (i.e. every finitely generated
subgroup of $G$ is finite), then $G$ is trivially a Howson group and the
action of $G$ on any semilattice 
is obviously locally finite. Thus we obtain the following consequence.

\begin{cor}
Let $G$ be a locally finite group acting on the left on a semilattice
$E$ via a homomorphism $\theta \colon G \to {\rm Aut}(E)$. Then $E \sdp G$ is
a Howson inverse semigroup. 
\end{cor}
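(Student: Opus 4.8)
The plan is to deduce the result directly from Corollary~\ref{lofi}, whose equivalence applies as soon as one knows that the action $\theta$ is locally finite and that $G$ is a Howson group. Since $G$ is assumed locally finite as a group, both of these hypotheses are essentially immediate, so the task reduces to verifying them.

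First I would confirm that $\theta$ is locally finite. Let $H$ be a finitely generated subgroup of $G$; by hypothesis $H$ is finite. Then for any $e \in E$ the orbit $H\cdot e = \{ h\cdot e \colon h \in H\}$ has at most $|H|$ elements and is therefore finite. As $H$ and $e$ are arbitrary, every orbit of a finitely generated subgroup is finite, which is precisely the definition of a locally finite action. Next I would confirm that $G$ is a Howson group: if $H_1,H_2$ are finitely generated subgroups of $G$, then each is finite, so $H_1 \cap H_2$ is a subgroup of the finite group $H_1$ and is itself finite, hence finitely generated.

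With both hypotheses established, Corollary~\ref{lofi} yields the equivalence between ``$E \sdp G$ is a Howson inverse semigroup'' and ``$G$ is a Howson group''; since the latter holds, so does the former. There is no genuine obstacle here: the only point meriting a moment's thought is the locally finite action check, which rests on the twin observations that finitely generated subgroups of a locally finite group are finite and that finite groups have finite orbits.
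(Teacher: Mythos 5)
Your proposal is correct and follows exactly the paper's own reasoning: the paper deduces the corollary from Corollary~\ref{lofi} after observing that a locally finite group is trivially a Howson group and that its action on any semilattice is locally finite, which are precisely the two checks you carry out. No issues.
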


The following example illustrates such a case. 

\begin{expl}
Let $FS_A$ be the free semilattice on a nonempty set $A$ and let
$S_A^f$ be the set of all permutations on $A$ with finite
support. Then $S_A^f$ is a locally finite group and so the natural action of
$S_A^f$ on $FS_A$ is locally finite.
\end{expl}

Indeed, $FS_A$ is the set of all finite nonempty subsets of $A$,
endowed with the union operation, and $S_A^f$ consists of all the
permutations of $A$ which fix all but finitely many elements of
$A$. Then $S_A^f$ acts on $FS_A$ by restriction, and is obviouslly
locally finite.

\medskip

Let $E$ be a semilattice with identity 1. We say that $E$ is {\em
  finite above} if $\{ f \in E \colon f \geq e\}$ is finite for every
$e \in E$. Given such a semilattice $E$, we define its {\em height
  function} as the function $\lambda:E \to \mathbb{N}$ defined by
$$\lambda(e) = \max\{ n \in \mathbb{N} \colon \mbox{ there exists a
  chain $1 = e_0 > \ldots > e_n = e$ in }E\}.$$
We say that $E$ is {\em strongly finite above} if it is finite above
and $\lambda\inv(n)$ is
finite for every $n \in \mathbb{N}$.

\begin{lma}
\label{lhf}
The action of any group on a strongly finite above
semilattice with identity is locally finite.
\end{lma}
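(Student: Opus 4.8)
The plan is to prove something slightly stronger than local finiteness, namely that \emph{every} orbit $G\cdot e$ is finite, irrespective of whether the acting subgroup is finitely generated; local finiteness then follows at once. The whole argument rests on a single observation: the height function $\lambda$ is invariant under the action, so each orbit is confined to one of its (finite) level sets.

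First I would record two preliminary facts. Since $1\wedge e = e$ for all $e \in E$, the identity is the maximum of $E$ in the natural order $e \leq f \iff e\wedge f = e$; consequently every $\pi \in \aut{E}$, being an order-isomorphism, must fix the maximum, so $g\cdot 1 = 1$ for all $g \in G$. Secondly, $\lambda$ is well defined (takes finite values) precisely because $E$ is finite above: any chain $1 = e_0 > \cdots > e_n = e$ consists of elements lying above $e$, so its length $n$ is bounded by $|\{f \in E \colon f \geq e\}| < \infty$.

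The crucial step is then to show that $\lambda(g\cdot e) = \lambda(e)$ for all $g \in G$ and $e \in E$. Given a witnessing chain $1 = e_0 > \cdots > e_n = e$ with $n = \lambda(e)$, applying the order-automorphism $\theta_g$, which fixes $1$, yields a chain $1 = g\cdot e_0 > \cdots > g\cdot e_n = g\cdot e$ of the same length $n$; hence $\lambda(g\cdot e) \geq \lambda(e)$. Applying the same inequality to $g\inv$ and $g\cdot e$ gives the reverse inequality, so equality holds.

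Finally, for any $e \in E$ the invariance just proved gives $G\cdot e \subseteq \lambda\inv(\lambda(e))$, and this level set is finite exactly because $E$ is strongly finite above. Thus every orbit is finite; in particular, for every (finitely generated) subgroup $H \leq G$ the orbit $H\cdot e$ is finite, so the action is locally finite. I do not anticipate any genuine obstacle: the only point requiring a little care is that semilattice automorphisms preserve both the top element and the order, which is exactly what makes $\lambda$ an invariant of the action.
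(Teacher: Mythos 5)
Your proof is correct and follows essentially the same route as the paper: both hinge on showing that the height function $\lambda$ is invariant under the action and that each orbit is therefore contained in a finite level set $\lambda\inv(\lambda(e))$. The only difference is cosmetic — the paper suggests verifying the invariance by induction on the level sets, whereas you do it directly by applying the order-automorphism $\theta_g$ (which fixes the identity) to a maximal chain and then using $g\inv$ for the reverse inequality, which is arguably the cleaner way to fill in the paper's ``straightforward to check''.
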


\begin{proof}
Indeed, it is straightforward to check that $\lambda(g \cdot e) =
\lambda(e)$ for all $g \in G$ and $e \in E$. This can be achieved by
induction on $\lambda\inv(n)$, starting with $\lambda\inv(0) = \{ 1
\}$, where 1 denotes the identity of $E$. Therefore $G \cdot e
\subseteq \lambda\inv(\lambda(e))$ and the 
action is locally finite.
\end{proof}

We provide next an example of a nontrivial action
where:
\begin{itemize}
\item the semilattice is infinite, has an identity and is strongly
  finite above; 
\item
the group is Howson but not locally finite.
\end{itemize}

\begin{expl}
 Let
$$E = \{ (2n+1,0) \colon n \geq 0\} \cup (\bigcup_{n \geq 1} \{ 2n \}
\times \mathbb{Z}/n\mathbb{Z}),$$
partially ordered by
$$(k,x) \leq (\ell,y) \mbox{ if }k < \ell \mbox{ or } (k,x) =
(\ell,y).$$
Then $E$ is clearly infinite with identity $(0,1)$ and finite above, and the 
heigth function is given by $\lambda(k,x) = k-1$. Thus $E$ is strongly
finite above. Let $G$ be the additive group $\mathbb{Z}$, which is
Howson but not locally finite. We define a (nontrivial) action $\theta \colon G
\to \aut{E}$ 
by
$$\theta_m(2n+1,0) = (2n+1,0),\quad \theta_m(2n, k+n\mathbb{Z}) =
(2n,m+k+n\mathbb{Z}).$$
\end{expl}

It is straightforward to check that $\theta$ is well defined, hence
it is locally finite by Lemma \ref{lhf}. Therefore
$E\sdp G$ is a Howson inverse semigroup by Corollary \ref{lofi}.

\section{Non locally finite actions}

The next two examples show that in this case $E \sdp G$ may be a
Howson inverse semigroup or not.

\begin{expl}
By O'Carroll's construction~\cite{OC76}, any free inverse semigroup $F$
embeds in a semidirect product $S$ of a semilatttice by a free group. If
$F$ is not monogenic, then $F$ is not a Howson inverse semigroup by 
\cite[Corollary~2.2]{JT89}, therefore $S$ is not a Howson inverse
semigroup either. However, free groups are Howson
groups. In view of Corollary \ref{lofi}, the action cannot be locally
finite.
\end{expl}

The next example shows that the action being locally finite is not a
necessary condition for the semidirect product to be a Howson inverse
semigroup.
 
\begin{expl}
Let $G$ be $(\Z,+)$ and let $E$ be $\Z$ with the usual ordering. We
consider the action $\theta \colon G \to {\rm Aut}(E)$ defined by
$\theta_n(m) = n+m$. Then $\theta$ is not locally finite but $E\sdp G$
is a Howson inverse semigroup.
\end{expl}

It is straightforward to check that $\theta$ is well defined and is
not locally finite. Note also that $G$, being a free group, is a
Howson group.

We say that an inverse subsemigroup $S \leq E \sdp G$ is {\em bounded}
if there exists some $M \in \Z$ such that $m \leq M$ for every $(m,n)
\in S$. It is easy to see that:
\begin{itemize}
\item
if $A = \{ (m_1,n_1), \ldots, (m_k,n_k) \}$ is a finite subset of $E \sdp
G$ closed under inversion, then $\langle A \rangle$ is bounded
(with $M = \max\{ m_1,\ldots,m_k\}$);
\item
the intersection of bounded inverse subsemigroups of $E\sdp G$ is
bounded.
\end{itemize}

Now we show that
\begin{equation}
\label{zz}
\begin{array}{c}
\mbox{if $S \leq E\sdp G$ is bounded and contains a nonidempotent,}\\
\mbox{then $S$ is finitely generated.}
\end{array}
\end{equation}

Given that $S$ is by assumption inverse and contains a nonidempotent,
the positive integer
$$N = \min\{ n > 0\colon (m,n) \in S \mbox{ for some } m \in \Z \}$$
is well-defined.
It follows easily from the division algorithm that 
$$S \subseteq \Z \times N\Z$$
(notice that, if $n,p>0$, then $(k,n)^p=(k,np)$).
For $i = 0,\ldots,N-1$, let 
$$S_i = \{ (m,n) \in S \colon m \equiv i\,(\mbox{mod}\, N) \}.$$
Then each nonempty $S_i$ is an inverse subsemigroup of $S$. Therefore
it suffices to show that each nonempty $S_i$ is finitely generated. 

Fixing such an $i$, and since $S$ is bounded, we can define
$M_i = \max\{ m \colon (m,n) \in S_i,\; n > 0\}$. Let
$$S'_i = \{ (m,0) \in S_i\colon m > M_i\}.$$

We claim that
\begin{equation}
\label{zz1}
S_i = \langle \{ (M_i,N)\} \cup S'_i \rangle.
\end{equation}

We show first that $(M_i,N) \in S_i$. By definition of $M_i$, we have
$(M_i,n) \in S_i$ for some $n > 0$. On the other hand, we have $(m,N)
\in S$ for some $m \in \Z$. Since $n > 0$, there exists some $k > 0$
such that $nk + m \geq M_i$. It follows easily that
$$(M_i,N) = (M_i,nk)(m,N)(M_i-nk,-nk) = (M_i,n)^k(m,N)(M_i,n)^{-k} \in S.$$
Now $(M_i,n) \in S_i$ yields $(M_i,N) \in S_i$. Therefore 
$\{ (M_i,N)\} \cup S'_i \subseteq S_i$ and so 
$$\langle \{ (M_i,N)\} \cup S'_i \rangle \subseteq S_i.$$ 

Conversely, let $(r,s) \in S_i$. We may assume that $s \geq 0$. If $r
> M_i$, then $s = 0$ by maximality of $M_i$ and so $(r,s) \in
S'_i$. Thus we may assume that $r \leq M_i$. Since $(r,s),(M_i,N) \in
S_i$, we may write $(r,s) = (M_i -pN,qN)$ for some $p,q \geq
0$. Assuming that $(M_i,N)^0$ denotes an absent factor, it
follows that
$$\begin{array}{lll}
(r,s)&=&(M_i-pN,-pN)(M_i,(p+q)N)\\
&=&(M_i-pN,-pN)(M_i,(p+q+1)N)(M_i-N,-N)\\
&=&(M_i,N)^{-p}(M_i,N)^{p+q+1}(M_i,N)^{-1}
\end{array}$$
and so $S_i \subseteq \langle \{ (M_i,N)\} \cup S'_i
\rangle$. Thus (\ref{zz1}) holds. Since $S$ is bounded, $S'_i$ is
finite and so each nonempty $S_i$ is
finitely generated. Therefore (\ref{zz}) holds.

Finally, we show that $E \sdp G$ is a Howson inverse semigroup. Let
$S,S'$ be finitely generated inverse subsemigroups of $E\sdp G$. We may
assume that $S \cap S'$ is infinite, hence $S$ and $S'$ are both
infinite. Since finitely generated semilatices are finite, then both
$S$ and $S'$ contain nonidempotents, say $(m,n)$ and $(m',n')$,
respectively. Without loss of generality, we may assume that $n,n' >
0$. Hence $(m,nn') = (m,n)^{n'} \in S$ and $(m',nn') = (m',n')^n \in
S'$. 

Suppose that $S\cap S'$ contains only idempotents. By our previous
remarks on boundedness, both $S$ and $S'$ are bounded and so is $S\cap
S'$. Since $S\cap S'$ is infinite, it follows that $(r,0) \in S\cap S'$
for some $r \leq m,m'$. Hence
$$(r,nn') = (r,0)(m,nn') \in S, \quad (r,nn') = (r,0)(m',nn') \in
S',$$
and so $S \cap S'$ would contain a nonidempotent, a
contradiction. Therefore $S\cap S'$ must contain a
nonidempotent. Since $S \cap S'$ is bounded, it follows from
(\ref{zz}) that $S\cap S'$ is finitely generated. Therefore $E\sdp G$
is a Howson inverse semigroup.

\section*{Acknowledgements}

The first author acknowledges support from the European Regional
Development Fund through the programme COMPETE
and the Portuguese Government through FCT (Funda\c c\~ao para a Ci\^encia e a
Tecnologia) under the project PEst-C/MAT/UI0144/2013.
This work was developed within the activities of FCT's project PEst-OE/MAT/UI0143/2013-14
of the Centro de \'Algebra da Universidade de Lisboa (CAUL),
that supported the visit of the first author to CAUL in July 2014.

\end{document}